\documentclass[11pt]{amsart}

\usepackage{amsmath,amssymb,amsthm,mathtools}
\usepackage{mathrsfs}
\usepackage{graphicx}
\usepackage{booktabs}
\usepackage{tikz-cd}
\usepackage[colorlinks=true,
linkcolor=blue,
citecolor=blue,
urlcolor=blue]{hyperref}
\usepackage[capitalise,noabbrev]{cleveref}

\makeatletter
\renewcommand{\section}{
	\@startsection{section}{1}{\z@}
	{2.5ex plus 1ex minus .2ex}
	{1.2ex plus .2ex}
	{\normalfont\large\bfseries}}
\renewcommand{\subsection}{
	\@startsection{subsection}{2}{\z@}
	{2ex plus .8ex minus .2ex}
	{.8ex plus .2ex}
	{\normalfont\normalsize\bfseries}}
\makeatother

\newtheorem{theorem}{Theorem}[section]
\newtheorem*{theorem*}{Theorem}

\newtheorem{lemma}{Lemma}[section]

\title{Geometric Construction of the McKay-Slodowy Correspondence} 
\author{Shengyu Hou} 

\begin{document}
	
	\begin{abstract}
		Let $G\subseteq \operatorname{SL}_2(\mathbb{C})$ be a finite group and let $H\trianglelefteq G$ be a normal subgroup. The McKay correspondence for $H$ says there is a one-to-one correspondence between nontrivial irreducible representations of $H$ and irreducible components of the exceptional locus of the minimal resolution of $\mathbb{C}^2/H$. We prove that this correspondence for $H$ is $G$-equivariant, and it induces a one-to-one correspondence between induced representations and push-forwards of exceptional curves. We also identify the intersection pairing of curves with the inner product of representations under this correspondence. 
	\end{abstract}
	
	\maketitle
	
	\section{Introduction}
	The McKay correspondence links representations of finite groups with the geometry of surface singularities. Let $H\subseteq \operatorname{SL}_2(\mathbb{C})$ be a finite group. Denote by $\operatorname{Irr}(H)$ the set of isomorphism classes of irreducible finite-dimensional complex representations of $H$. Let $1$ be the trivial representation and set $\operatorname{Irr}_0(H):=\operatorname{Irr}(H)\setminus\{1\}$. Let $T_H$ be the natural representation  $H\hookrightarrow \operatorname{SL}_2(\mathbb{C})$. For each $\tau\in\operatorname{Irr}(H)$, write
	\[
	T_H\otimes\tau
	={\bigoplus}_{\sigma\in\operatorname{Irr}(H)}a_{\sigma\tau}\mkern1mu\sigma.
	\]
	There is a natural $H$-action on the two-dimensional complex affine space $\mathbb{C}^2$. Let $\varphi\colon X\to\mathbb{C}^2/H$ be the minimal resolution and let $\operatorname{Irr}(\operatorname{Exc}(\varphi ))$ be the set of all irreducible components of the exceptional locus. The McKay correspondence says there is a one-to-one correspondence
	\[
	\operatorname{Irr}_0(H)\longrightarrow
	\operatorname{Irr}(\operatorname{Exc}(\varphi)),
	\qquad \tau\longmapsto E_\tau,
	\] and under this correspondence, for all $\tau , \sigma \in \operatorname{Irr}_0(H)$, the intersection number $
	(E_\tau\cdot E_\sigma)_X$ is equal to $a_{\sigma\tau}-2\delta_{\sigma\tau}$, where $\delta _{\sigma \tau }$ is the Kronecker delta.
	Note that $a_{\sigma \tau }$ is equal to the inner product of representations $\langle T_H\otimes \tau , \sigma \rangle  _H:=\dim_\mathbb{C}\operatorname{Hom}_{\mathbb{C}[H]}(T_H\otimes \tau , \sigma )$. Thus the equation can also be written as 
	$$(E_\tau \cdot E_\sigma )_X=\langle (T_H-2)\tau , \sigma \rangle _H.$$
	
	McKay first found this relation in \cite{McK80}. Gonzalez-Sprinberg and Verdier gave a geometric construction of the correspondence in \cite{GSV83}, and Kn\"orrer later gave another proof in \cite{Kn85}. In \cite{IN99}, Ito and Nakamura identified $X$ with the $H$-Hilbert scheme and gave another geometric construction. Kapranov and Vasserot lifted the correspondence to a Fourier--Mukai equivalence of derived categories in \cite{KV00}. Bridgeland, King and Reid developed the derived McKay correspondence into higher dimensional case in \cite{BKR01}.
	
	Now let $H\trianglelefteq G\subseteq\operatorname{SL}_2(\mathbb{C})$ be two finite groups. From now on, $\operatorname{Ind}$ and $\operatorname{Res}$ mean induction and restriction of representations between $H$ and $G$. Set $\operatorname{Ind}\mkern1mu\operatorname{Irr}(H)
	:=\{\operatorname{Ind}\tau\mid \tau\in\operatorname{Irr}(H)\}$ and $\operatorname{Ind}\mkern1mu\operatorname{Irr}_0(H)=\operatorname{Ind}\mkern1mu\operatorname{Irr}(H)\setminus \{\operatorname{Ind}1\}$. In \cite[Appendix~III]{Slo}, 
	Slodowy studied the action of the natural representation $T_G:G\hookrightarrow \operatorname{SL}_2(\mathbb{C})$ on these induced representations. For all $\operatorname{Ind}\tau\in\operatorname{Ind}\mkern1mu\operatorname{Irr}(H)$, consider decompositions of the form
	\[
	T_G\otimes\operatorname{Ind}\tau
	={\bigoplus}_{\operatorname{Ind}\sigma\in\operatorname{Ind}\mkern1mu\operatorname{Irr}(H)}
	b_{\operatorname{Ind}\tau,\operatorname{Ind}\sigma}\mkern1mu\operatorname{Ind}\sigma.
	\]		
	The action of $G$ on $\mathbb{C}^2$ descends to $\mathbb{C}^2/H$ and lifts uniquely to $X$ (Lemma \ref{lem:lift-and-linearization}). We write $\pi\colon X\to X/G$ for the quotient map. Thus we have a commutative diagram
	\[
	\begin{tikzcd}
		X \arrow[d,"\varphi"'] \arrow[r,"\pi"]
		& X/G \arrow[d] \\
		\mathbb{C}^2/H \arrow[r]
		& \mathbb{C}^2/G.
	\end{tikzcd}
	\]
	It is natural to ask whether there is a correspondence between $\operatorname{Ind}\mkern1mu\operatorname{Irr}_0(H)$ and irreducible exceptional curves on $X/G$, and how the intersection numbers on $X/G$ relate to the decomposition numbers $b_{\operatorname{Ind}\tau , \operatorname{Ind}\sigma }$.
	
	In \cite{DHW26}, Du, Wang and the author computed the intersection matrices case by case and found a relation between them and the matrix $(b_{\operatorname{Ind}\tau , \operatorname{Ind}\sigma })$. These computations verify the relation in all cases and suggest that it should have a more conceptual explanation. The aim of this paper is to give a conceptual explanation. 
	
	Our starting point is the similarity between some formulas in representation theory and geometry. The inner product of representations and the intersection pairing of curves satisfy similar formulas arising from adjoint functors (Lemma \ref{lem:Frobenius-reciprocity-Mackey-formula} and Lemma \ref{lem:projection-formula}):
	$$\langle \operatorname{Res}\rho,\tau\rangle_H
	=\langle \rho,\operatorname{Ind}\tau\rangle_G,
	\qquad
	(\pi^*B\cdot E)_X
	=(B\cdot\pi_*E)_{X/G},$$
	while the Mackey formula and the projection formula (Lemma \ref{lem:Frobenius-reciprocity-Mackey-formula} and Lemma \ref{lem:projection-formula}) also have the same form:
	$$\operatorname{Res}\mkern1mu\operatorname{Ind}\tau
	={\bigoplus}_{gH\in G/H}\tau^g,
	\qquad
	\pi^*\pi_*E
	={\sum}_{gH\in G/H}g^*E.$$
	These formulas show that induction and restriction of representations are parallel to push-forward and pull-back of exceptional curves, and the inner product is parallel to the intersection pairing. Together with the $G$-equivariance of the McKay correspondence, this gives a natural correspondence between induced representations and push-forwards of exceptional curves. This is our main result:
	
	\begin{theorem}\label{thm:main-theorem}
		Notation as above. Let $\pi_*\operatorname{Irr}(\operatorname{Exc}(\varphi )):=\{\pi_*E\mid E\in \operatorname{Irr}(\operatorname{Exc}(\varphi ))\}$. Then:
		\begin{enumerate}
			\item The McKay correspondence $\tau\mapsto E_\tau$ is $G$-equivariant. In other words, $g^*E_\tau=E_{\tau^g}$ for every $g\in G$ and $\tau\in\operatorname{Irr}_0(H)$.
			
			\item $\operatorname{Ind}\mkern1mu\operatorname{Irr}_0(H)
			\to
			\pi_*\operatorname{Irr}(\operatorname{Exc}(\varphi))$, $\operatorname{Ind}\tau\mapsto\pi_*E_\tau$
			is a one-to-one correspondence.
			
			\item For all $\tau,\sigma\in\operatorname{Irr}_0(H)$, 
			\[
			(\pi_*E_\tau\cdot\pi_*E_\sigma)_{X/G}
			=\big\langle
			(T_G-2)\otimes\operatorname{Ind}\tau,
			\operatorname{Ind}\sigma
			\big\rangle_G.
			\]
		\end{enumerate}
	\end{theorem}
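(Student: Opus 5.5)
For part (1) I would use a canonical construction of the McKay correspondence that is functorial in automorphisms normalizing $H$; the cleanest is Gonzalez-Sprinberg--Verdier. For $\tau\in\operatorname{Irr}(H)$ let $\mathcal{F}_\tau:=(\varphi'_*(\mathcal{O}_{Z}\otimes\tau^\vee))^H$ be the tautological bundle on $X$, where $Z=X\times_{\mathbb{C}^2/H}\mathbb{C}^2$ with the reduced structure modulo torsion (equivalently, via Ito--Nakamura, the tautological bundle on $H\text{-}\operatorname{Hilb}$). Then $E_\tau$ is characterized by $(c_1(\mathcal{F}_\tau)\cdot E_\sigma)=\delta_{\sigma\tau}$. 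Let $g\in G$. Then $g$ acts on $\mathbb{C}^2$ normalizing $H$, and by Lemma~\ref{lem:lift-and-linearization} the induced action lifts to $X$ compatibly with $Z$. Pulling back by $g$ identifies $g^*\mathcal{F}_\tau$ with $\mathcal{F}_{\tau^g}$, because twisting the $H$-action by conjugation converts $\tau$-isotypic parts into $\tau^g$-isotypic parts. Since $g$ is an automorphism, it preserves intersection numbers. Hence $(c_1(\mathcal{F}_{\tau^g})\cdot g^*E_\sigma)=\delta_{\sigma\tau}$, and $g^*E_\sigma$ is again an exceptional component. This forces $g^*E_\tau=E_{\tau^g}$. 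The main point to check carefully is the sign or inverse convention in $\tau^g$ versus $g^*$; I will fix it to match the Mackey formula in Lemma~\ref{lem:Frobenius-reciprocity-Mackey-formula}.

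For part (2), surjectivity is by definition. For well-definedness and injectivity, I use that $\operatorname{Ind}\tau\cong\operatorname{Ind}\sigma$ iff $\sigma=\tau^g$ for some $g$. This follows from Mackey and Frobenius reciprocity: $\langle\operatorname{Ind}\tau,\operatorname{Ind}\sigma\rangle_G=\sum_{gH}\langle\tau^g,\sigma\rangle_H$, and an irreducible summand common to both forces this conclusion by Clifford theory. Similarly, $\pi_*E=\pi_*E'$ iff $E'=g^*E$ for some $g$, since the pushforward of a prime divisor is a positive multiple of its image, which determines the $G$-orbit. More robustly, apply $\pi^*$ and use the projection formula in Lemma~\ref{lem:projection-formula}: $\pi^*\pi_*E=\sum_{gH}g^*E$ determines the orbit, because distinct components are linearly independent. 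By (1), $g^*E_\tau=E_{\tau^g}$, so the two equivalence relations match. I also need $\tau\ne1\Rightarrow\tau^g\ne1$, which is clear.

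For part (3), compute both sides through pullback. On the geometric side, Lemma~\ref{lem:projection-formula} gives
\[
(\pi_*E_\tau\cdot\pi_*E_\sigma)_{X/G}=(\pi^*\pi_*E_\tau\cdot E_\sigma)_X={\sum}_{gH\in G/H}(E_{\tau^g}\cdot E_\sigma)_X .
\]
By McKay for $H$, this equals $\sum_{gH}\langle (T_H-2)\tau^g,\sigma\rangle_H$. On the representation side, Frobenius reciprocity and $\operatorname{Res}T_G=T_H$ give
\[
\langle (T_G-2)\operatorname{Ind}\tau,\operatorname{Ind}\sigma\rangle_G=\langle (T_H-2)\operatorname{Res}\operatorname{Ind}\tau,\sigma\rangle_H .
\]
The Mackey formula then rewrites this as $\sum_{gH}\langle (T_H-2)\tau^g,\sigma\rangle_H$, so the two sides agree.

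Two technical points remain. The projection formula for the finite map $\pi$ of normal surfaces requires Mumford's $\mathbb{Q}$-valued intersection theory on $X/G$, and I will assume Lemma~\ref{lem:projection-formula} is stated in that setting. Also, $\pi^*\pi_*E=\sum_{gH}g^*E$ sums over $G/H$ rather than over $G$ or over the orbit, so the normalization of $\pi_*$ must be consistent with the Mackey sum. I expect this to hold as stated in the lemma. I also expect part (1) to be the real obstacle, specifically verifying that the lifted $G$-action on $X$ is compatible with the tautological bundles.
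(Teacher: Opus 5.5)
Your proposal is correct and follows the paper's proof essentially step for step. Part (1) uses the natural $G$-linearization of the Gonzalez-Sprinberg--Verdier bundle, giving $g^*\mathcal{E}_\sigma\cong\mathcal{E}_{\sigma^g}$, together with the dual-basis characterization $(\det\mathcal{E}_\sigma)\cdot E_\tau=\delta_{\sigma\tau}$. Part (2) matches $G$-orbits through the Mackey formula and $\pi^*\pi_*E=\sum_{gH}g^*E$, and part (3) is the identical chain of projection formula, McKay for $H$, Mackey, and Frobenius reciprocity. The convention you flag comes out as you need: with $\tau^g(h)=\tau(ghg^{-1})$ and $g^*E=g^{-1}(E)$ one gets exactly $g^*\mathcal{E}_\sigma\cong\mathcal{E}_{\sigma^g}$, and in any case (2) and (3) are insensitive to it because $gH\mapsto g^{-1}H$ permutes $G/H$.
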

	
	Part~(1) follows from the natural $G$-linearization of the vector bundle in the Gonzalez-Sprinberg--Verdier construction. It gives part~(2) by passing to $G$-orbits. Part~(3) then follows from the parallel formulas above.
	
	The paper is organized as follows. In Section~2, we recall basic facts about representations, finite quotients, equivariant sheaves, and intersection pairings on normal surfaces. In Section~3, we recall the geometric construction of the McKay correspondence due to Gonzalez-Sprinberg and Verdier. In Section~4, we prove the main theorem. 
	
	\subsection*{Acknowledgements}
	The author is grateful to Professor Lizhong Wang for suggesting this problem, and is grateful to Professor Yifei Chen for his guidance and support. The author was partially supported by the NSFC Grant No.~12271384.
	
	\section{Preliminaries}
	
	Throughout the paper, the ground field is $\mathbb{C}$. A variety is an integral separated scheme of finite type, a curve is a variety of dimension $1$, and a surface is a variety of dimension $2$.
	
	\subsection{Representations and inner products}\label{subsec:representations}
	
	Let $H$ be a finite group. Let $\operatorname{Irr}(H)$ be the set of all irreducible representations. Its representation group is
	$R(H):=\bigoplus_{\tau \in\operatorname{Irr}(H)}\mathbb{Z}\cdot\tau $. There is an \textbf{inner product} $$R(H)\times R(H)\to \mathbb{Z},\quad \langle \tau , \sigma \rangle_H :=\dim_\mathbb{C}\operatorname{Hom}_{\mathbb{C}[H]}(\tau , \sigma ).$$
	
	Now let $H\trianglelefteq G\subseteq\operatorname{SL}_2(\mathbb{C})$ be two finite groups. The restriction $\operatorname{Res}$ and induction $\operatorname{Ind}$ of representations between $G$ and $H$ are classical in representation theory; see \cite[Chapter XVIII]{Lang02}. For $g\in G$ and an $H$-representation $\tau$, define $\tau^g$ by $\tau^g(h):=\tau(ghg^{-1})$. We have the Frobenius reciprocity and the Mackey formula:
	
	\begin{lemma}[{\cite[Chapter XVIII]{Lang02}}]\label{lem:Frobenius-reciprocity-Mackey-formula}\ 
		\begin{enumerate}
			\item (Frobenius Reciprocity) For all $\rho\in R(G)$ and $\tau \in R(H)$, $$\left\langle \operatorname{Res}\rho, \tau \right\rangle _H=\left\langle \rho, \operatorname{Ind}\tau \right\rangle _G.$$
			\item (Mackey Formula) For all $\tau \in R(H)$,  $$\operatorname{Res}\mkern1mu\operatorname{Ind}\tau ={\bigoplus} _{gH\in G/H}\ \tau ^g.$$
		\end{enumerate}
	\end{lemma}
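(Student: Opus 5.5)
The statement is classical (cited from Lang), so my plan is to reduce both parts to the explicit description of induction and a short character computation. I will use the model $\operatorname{Ind}\tau=\mathbb{C}[G]\otimes_{\mathbb{C}[H]}\tau$. For a finite group the character inner product satisfies $\langle \alpha,\beta\rangle=\frac{1}{|G|}\sum_{g}\chi_\alpha(g)\overline{\chi_\beta(g)}$. This agrees with $\dim\operatorname{Hom}$ on genuine representations by Schur's lemma and complete reducibility (Maschke). Both sides of each identity are additive in their arguments, so it suffices to treat genuine representations, and then extend bilinearly to $R(G)$ and $R(H)$.

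For Frobenius reciprocity (1), I would give the adjunction directly. A $G$-map $\rho\to\operatorname{Ind}\tau$ is the same as an $H$-map $\operatorname{Res}\rho\to\tau$, because $\operatorname{Ind}$ is both left and right adjoint to $\operatorname{Res}$ when the index is finite. Concretely, I would use $\operatorname{Hom}_{\mathbb{C}[H]}(\mathbb{C}[G],\tau)\cong \mathbb{C}[G]\otimes_{\mathbb{C}[H]}\tau$ together with the tensor–hom adjunction $\operatorname{Hom}_G(\rho,\operatorname{Hom}_H(\mathbb{C}[G],\tau))\cong \operatorname{Hom}_H(\operatorname{Res}\rho,\tau)$. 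Taking dimensions gives the equality.

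Alternatively, I could use the character formula $\chi_{\operatorname{Ind}\tau}(g)=\frac{1}{|H|}\sum_{x\in G,\ x^{-1}gx\in H}\chi_\tau(x^{-1}gx)$ and interchange sums. The point I must watch is the orientation of inner products and Homs: the paper writes $\langle \rho,\operatorname{Ind}\tau\rangle_G=\dim\operatorname{Hom}(\rho,\operatorname{Ind}\tau)$. The adjunction giving this orientation is the coinduction one, which is isomorphic to induction. In any case, the inner product is symmetric on genuine representations, since $\dim\operatorname{Hom}(\alpha,\beta)=\sum m_i n_i$ in terms of multiplicities.

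For the Mackey formula (2), I would decompose $\mathbb{C}[G]\otimes_{\mathbb{C}[H]}\tau=\bigoplus_{gH\in G/H} g\otimes\tau$ as vector spaces. Because $H$ is normal, each summand $g\otimes\tau$ is $H$-stable: $h(g\otimes v)=g\otimes (g^{-1}hg)v$. So $g\otimes\tau$ is the representation $h\mapsto\tau(g^{-1}hg)$, which is $\tau^{g^{-1}}$ in the paper's convention. As $gH$ runs over $G/H$, so does $g^{-1}H$, and $\tau^{g}$ depends only on $gH$ up to isomorphism: $\tau^{gh}\cong\tau^g$ via the intertwiner $\tau(h)$. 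Hence the direct sum is $\bigoplus_{gH}\tau^g$.

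The only mildly delicate step is this bookkeeping of conventions ($g$ versus $g^{-1}$, and the well-definedness on cosets). Everything else is formal.
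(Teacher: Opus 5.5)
Your proof is correct and is the standard textbook argument (the adjunction with coinduction for Frobenius reciprocity, and the coset decomposition of $\mathbb{C}[G]\otimes_{\mathbb{C}[H]}\tau$ into $H$-stable pieces for the Mackey formula). The paper gives no proof of its own and simply cites Lang, so this is essentially the same approach. One tiny slip: $\tau(h)$ intertwines $\tau^g$ with $\tau^{hg}$, whereas the intertwiner from $\tau^g$ to $\tau^{gh}$ is $\tau(ghg^{-1})$; since $gH=Hg$, this does not affect your conclusion.
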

	
	\subsection{Quotients by finite groups}
	
	Let $\mathcal{C}$ be a category, and let $X\in \mathcal{C}$ be an object with automorphism group $\operatorname{Aut}(X)$. We say a finite group $G$ \textbf{acts} on $X$ if there exists a group homomorphism $G \to \operatorname{Aut}(X)$. Under this homomorphism, each $g \in G$ is identified with an automorphism $g:X \to X$. A morphism $f: X \to Y$ is called \textbf{$G$-invariant} if $G$ acts on $X$ and $f \circ g = f$ for all $g \in G$. A morphism $f: X \to Y$ is called \textbf{$G$-equivariant} if $G$ acts on both $X$ and $Y$, and $f \circ g = g \circ f$ for all $g \in G$. A morphism $\pi: X \to X/G$ is called the \textbf{(categorical) quotient} of $X$ by $G$ if for every $G$-invariant morphism $f: X \to Y$, there exists a unique morphism $\bar{f}: X/G \to Y$ such that $f = \bar{f} \circ \pi$. It follows immediately that the quotient is unique (up to isomorphism) if it exists.
	
	In the category of quasi-projective varieties, \cite[Chapter~3, Exercise~3.23 and Chapter~4, Exercise~1.10]{Liu02} shows that if a finite group $G$ acts on a quasi-projective variety $X$, then the quotient morphism $\pi: X \to X/G$ always exists and is finite and surjective. Moreover, $X/G$ is normal if $X$ is normal. 
	
	\subsection{Equivariant sheaves}
	Let $G$ be a finite group acting on a variety $X$, and let $\mathcal{F}$ be a coherent sheaf on $X$. A \textbf{$G$-linearization} of $\mathcal{F}$ is a family of isomorphisms $\{\lambda _g:g^*\mathcal{F}\xrightarrow{\sim }\mathcal{F}\}_{g\in G}$ such that $\lambda _{gh}=\lambda _h\circ h^*\lambda _g$ for all $g, h\in G$. A \textbf{$G$-sheaf} is a coherent sheaf together with a $G$-linearization. Their category is denoted by $\operatorname{Coh}_G(X)$. We refer to \cite[\S3]{BKR01} for more details on $G$-sheaves.
	
	Suppose $G$ acts trivially on $X$. Then every $G$-sheaf is an $\mathscr{O}_X[G]$-module. Let $\rho$ be a representation of $G$. We have two functors:
	$$
	\begin{array}{ccccccc}
		\operatorname{Coh}(X) & \to & \operatorname{Coh}_G(X), & & \mathcal{F} & \mapsto & \mathcal{F} \otimes_{\mathbb{C}} \rho, \\
		\operatorname{Coh}_G(X) & \to & \operatorname{Coh}(X), & & \mathcal{G} & \mapsto & \operatorname{Hom}_{\mathbb{C}[G]}(\rho, \mathcal{G}).
	\end{array}
	$$
	Since $\mathbb{C}[G]$ is a semisimple ring, these two functors are both exact. In particular, when $\rho = 1$ is the trivial representation, $\mathcal{F} \otimes_{\mathbb{C}} 1$ is the $G$-sheaf with trivial $G$-linearization, and $\operatorname{Hom}_{\mathbb{C}[G]}(1, \mathcal{G})$ is the subsheaf of $G$-invariant sections $\mathcal{G}^G\subseteq \mathcal{G}$.
	
	\subsection{Intersection Pairings and Projection Formulas}
	\label{subsec:Intersection-Pairing-and-Projection-Formulas}
	
	Let $X$ be a variety, and let $Z_1(X)$ be the free abelian group generated by the complete irreducible curves on $X$. For each complete irreducible curve $E\subseteq X$, let $\nu_E\colon \widetilde{E}\to E$ be its normalization. We define the intersection number $\operatorname{Pic}(X)\times Z_1(X)\to\mathbb{Z}$ by $
	(\mathcal{L}\cdot E)_X:=\deg\bigl(\nu_E^*(\mathcal{L}|_E)\bigr)$.
	
	Now suppose that $X$ is a normal quasi-projective surface. We use the intersection theory extended by Mumford, following the conventions and results collected in \cite[Section~1.3]{Nak23}. It gives a symmetric bilinear pairing $(-\cdot -)_X\colon Z_1(X)\times Z_1(X)\to\mathbb{Q}$. If $E_1\in Z_1(X)$ is a Cartier divisor and $\mathscr{O}_X(E_1)$ is the line bundle associated with $E_1$, then $(E_1\cdot E_2)_X=\bigl(\mathscr{O}_X(E_1)\cdot E_2\bigr)_X$.
	
	Let $G$ be a finite group acting on $X$, and let $\pi\colon X\to Y:=X/G$ be the quotient morphism. Let $E\subseteq X$ be an irreducible complete curve and set $B:=\pi(E)$. The \textbf{push-forward} of $E$ is $\pi_*E:=f_EB$, where $f_E:=\deg(\pi|_E\colon E\to B)$. Let $B\subseteq X/G$ be an irreducible complete curve. Its \textbf{pull-back} is $\pi^*B:=\sum_{E\subseteq \pi^{-1}(B)}e_EE$, where $E$ runs through the irreducible components of $\pi^{-1}(B)$ and $e_E$ is defined by $\mathfrak{m}_{\eta_B}\mathscr{O}_{X,\eta_E}=\mathfrak{m}_{\eta_E}^{e_E}$.
	
	For an irreducible complete curve $E\subseteq X$ and $g\in G$, we view $g$ as an automorphism of $X$ and write $g^*E$ for the preimage $g^{-1}(E)$. We have the following two projection formulas.
	
	\begin{lemma}[Projection Formulas]\label{lem:projection-formula}\ 
		\begin{enumerate}
			\item For all $E\in Z_1(X)$ and $B\in Z_1(X/G)$,
			$$
			(\pi^*B\cdot E)_X=(B\cdot\pi_*E)_{X/G}.
			$$
			
			\item For all $E\in Z_1(X)$,
			$$
			\pi^*\pi_*E={\sum}_{gH\in G/H}g^*E,
			$$
			where $H=\ker(G\to\operatorname{Aut}(X))$.
		\end{enumerate}
	\end{lemma}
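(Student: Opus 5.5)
We prove the two projection formulas of Lemma \ref{lem:projection-formula} in order, deducing (1) from the projection formula for line bundles and (2) from a local computation at generic points of curves. Since both sides are linear, it suffices throughout to take $E$ and $B$ irreducible complete curves. The group $G/H$ acts faithfully on $X$ with the same quotient, so we may replace $G$ by $G/H$ and assume the action is faithful.

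\textbf{Part (1).} First treat the case where $B$ is $\mathbb{Q}$-Cartier. Since $X/G$ is a normal surface, after passing to a multiple we may assume $B$ is Cartier, with $\mathscr{O}_{X/G}(B)=\mathcal{L}$. The pullback line bundle $\pi^*\mathcal{L}$ is then $\mathscr{O}_X(\pi^*B)$. To check this, compare multiplicities at the generic point of each component $E'$ of $\pi^{-1}(B)$: a local equation of $B$ at $\eta_B$ is a uniformizer of the DVR $\mathscr{O}_{X/G,\eta_B}$, and its image in $\mathscr{O}_{X,\eta_{E'}}$ has valuation $e_{E'}$ by definition. Hence
\[
(\pi^*B\cdot E)_X=\deg \nu_E^*(\pi^*\mathcal{L}|_E).
\]
The normalization of $E$ maps finitely onto the normalization of $B$ with degree $f_E$, and degree of line bundles is multiplicative under finite maps of smooth curves. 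So this equals $f_E\deg\nu_B^*(\mathcal{L}|_B)=(B\cdot \pi_*E)_{X/G}$.

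For a general Weil divisor $B$, Mumford's pairing is defined through the minimal resolution, or equivalently by $\mathbb{Q}$-linear extension. On a normal surface with quotient singularities every Weil divisor is $\mathbb{Q}$-Cartier, and this holds for $X/G$ when $X$ is smooth (or itself has quotient singularities). Otherwise I would use the general fact from \cite[Section~1.3]{Nak23} that Mumford pullback commutes with finite morphisms. This general case is the step I expect to need the most care: one must check that the pairing in the conventions of \cite{Nak23} is compatible with the finite map $\pi$. I would try to cite the projection formula for finite morphisms of normal surfaces there directly.

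\textbf{Part (2).} Set $B=\pi(E)$. The components of $\pi^{-1}(B)$ form one $G$-orbit $\{g^{-1}(E)\}$, because $G$ acts transitively on the fibres of $\pi$. Writing $\operatorname{Stab}(E)$ for the stabilizer of $E$, this gives
\[
\sum_{g\in G}g^*E=|\operatorname{Stab}(E)|\sum_{E'\subseteq\pi^{-1}(B)}E'.
\]
It therefore suffices to show $f_Ee_E=|\operatorname{Stab}(E)|$; the right side is constant along the orbit. This is the fundamental identity $\sum e_if_i=|G|$ for the Galois extension of DVRs $\mathscr{O}_{X/G,\eta_B}\subseteq$ the integral closure in $K(X)$. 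The extension $K(X)/K(X/G)$ is Galois with group $G$, and the residue field extension $k(E)/k(B)$ has degree $f_E$. Here the residue characteristic is $0$, so the extension is separable, and $e$ and $f$ are equal over all primes in the orbit. The number of primes over $\eta_B$ is $|G|/|\operatorname{Stab}(E)|$, so $e_Ef_E=|\operatorname{Stab}(E)|$.

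Combining these with $\pi_*E=f_EB$ gives
\[
\pi^*\pi_*E=f_E\,e_E\sum_{E'}E'=\sum_{g\in G}g^*E.
\]
This is exactly the required formula, since the sum over $G$ after the reduction is the sum over $gH\in G/H$ in the original notation.
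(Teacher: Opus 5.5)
Your argument is correct, but it is organized differently from the paper's.

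For (1), the paper simply cites \cite[Remark~1.24(2)]{Nak23}. You instead verify the formula directly when $B$ is $\mathbb{Q}$-Cartier: you match $\pi^*\mathscr{O}_{X/G}(mB)$ with $\mathscr{O}_X(m\pi^*B)$ at generic points, then use multiplicativity of degree on normalizations. You fall back on the same reference only when $X/G$ is not $\mathbb{Q}$-factorial. In the paper's application $X$ is the smooth minimal resolution, so $X/G$ is $\mathbb{Q}$-factorial and your computation already covers what is needed. There is a small notational slip: the curve whose normalization receives $\widetilde{E}$ with degree $f_E$ is $\pi(E)$, not the divisor $B$ you are pulling back.

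For (2), the paper gets $\pi^*\pi_*E=\lambda\sum_{gH}g^*E$ from transitivity. It then pins down $\lambda=1$ by pushing forward and citing $\pi_*\pi^*=|G/H|$ from \cite[Remark~1.24(3)]{Nak23}. You prove this numerical input by hand instead. You use the fundamental identity $\sum e_if_i=[K(X):K(X/G)]$ over the DVR $\mathscr{O}_{X/G,\eta_B}$, Galois-invariance of $e$ and $f$, and the orbit count, and obtain $e_Ef_E=|\operatorname{Stab}(E)|$.

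These are the same fact in two guises: $\pi_*\pi^*B=\deg(\pi)\,B$ is exactly $\sum e_if_i=\deg\pi$ at $\eta_B$. The paper's route is shorter because it outsources this identity. Yours is self-contained and gives the sharper local statement $e_Ef_E=|\operatorname{Stab}_{G/H}(E)|$. One minor correction: what you actually need there is separability of $K(X)/K(X/G)$ and finiteness of $\pi$, not separability of the residue extension.
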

	\begin{proof}
		Part~(1) follows from \cite[Remark~1.24(2)]{Nak23}. We prove part~(2) using \cite[Remark~1.24(3)]{Nak23}. Let $E\subseteq X$ be an irreducible complete curve. We have $\operatorname{Supp}(\pi^*\pi_*E)=\bigcup_{gH\in G/H}g^*E$. The group $G$ acts transitively on the irreducible components of this support. Hence all components have the same coefficient. Thus $\pi^*\pi_*E=\lambda\sum_{gH\in G/H}g^*E$ for some $\lambda\in\mathbb{Q}$. Pushing it forward gives $\pi_*\pi^*\pi_*E=\lambda\pi_*\sum_{gH\in G/H}g^*E=\lambda|G/H|\pi_*E$. On the other hand, \cite[Remark~1.24(3)]{Nak23} shows $\pi_*\pi^*(\pi_*E)=|G/H|\pi_*E$. Therefore $\lambda=1$.
	\end{proof}
	
	\section{The Gonzalez-Sprinberg--Verdier construction}\label{sec:gsv}
	
	We recall the geometric construction of the McKay correspondence in \cite{GSV83}. Let $H\subseteq\operatorname{SL}_2(\mathbb{C})$ be a finite group. $H$ acts naturally on $\mathbb{C}^2$. Let $\varphi\colon X\to\mathbb{C}^2/H$ be the minimal resolution. Let
	$\mathcal U:=(X\times_{\mathbb{C}^2/H}\mathbb{C}^2)_{\mathrm{red}}$ be the reduced scheme associated to the fiber product of $X$ and $\mathbb{C}^2$ over $\mathbb{C}^2/H$.
	There is a commutative diagram
	\[
	\begin{tikzcd}
		\mathcal U \arrow[d,"q"'] \arrow[r,"p"]
		& X \arrow[d,"\varphi"] \\
		\mathbb{C}^2 \arrow[r]
		& \mathbb{C}^2/H.
	\end{tikzcd}
	\]
	By \cite[Proposition~2.4]{GSV83}, the morphism $p$ is finite and flat, and
	$\mathcal E:=p_*\mathscr O_{\mathcal U}$ is locally free.
	Indeed, if we regard $X$ as the $H$-Hilbert scheme $\operatorname{Hilb}^H(\mathbb{C}^2)$ (see \cite[Theorem 9.3]{IN99}), then $\mathcal{U}$ is its universal family and $\mathcal{E}$ is its universal bundle. Since we do not require the $H$-Hilbert scheme structure of $X$, we use the simpler language above for clarity. 
	
	Let $T_H:H\hookrightarrow \operatorname{SL}_2(\mathbb{C})$ be the natural representation. For all $\tau \in \operatorname{Irr}_0(H)$, let $\mathcal{E}_\tau:=\operatorname{Hom}_{\mathbb{C}[H]}(\tau,\mathcal{E})$. \cite[2.2 Th\'eor\`em (i)]{GSV83} shows the correspondence between $\operatorname{Irr}_0(H)$ and $\operatorname{Irr}(\operatorname{Exc}(\varphi ))$:
	
	\begin{theorem}[{\cite[2.2 Th\'eor\`em (i)]{GSV83}}]\label{thm:GSV-correspondence}\ 
		
		There is a one-to-one correspondence $$\operatorname{Irr}_0(H) \to \operatorname{Irr}(\operatorname{Exc}(\varphi)),\quad\tau \mapsto E_\tau,$$ where $E_\tau $ is characterized by
		$(\det\mathcal{E}_{\sigma })\cdot E_{\tau } =\delta _{\sigma \tau }=
		\begin{cases}
			1 & \sigma =\tau \\
			0 & \sigma \neq \tau 
		\end{cases}$ for all  $\sigma , \tau \in \operatorname{Irr}_0(H)$.
		
		Moreover, for all $\tau , \sigma \in \operatorname{Irr}_0(H)$, $$(E_\tau \cdot E_\sigma)_X =\langle (T_H-2)\tau , \sigma \rangle_H.$$
	\end{theorem}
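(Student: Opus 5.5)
We follow the strategy of Gonzalez-Sprinberg and Verdier, working with the tautological bundles $\mathcal{E}_\tau=\operatorname{Hom}_{\mathbb{C}[H]}(\tau,\mathcal{E})$. Set $L_\tau:=\det\mathcal{E}_\tau\in\operatorname{Pic}(X)$.

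\textbf{Step 1: basic properties of the $\mathcal{E}_\tau$.} Since $p$ is finite and flat, $\mathcal{E}=p_*\mathscr{O}_{\mathcal U}$ is locally free. Over $X\setminus\operatorname{Exc}(\varphi)$ its fibres are the regular representation of $H$, because there $p$ is an étale $H$-torsor. Hence $\mathcal{E}\cong\bigoplus_\tau\mathcal{E}_\tau\otimes\tau$ with $\operatorname{rk}\mathcal{E}_\tau=\dim\tau$, and $\mathcal{E}_1=\mathcal{E}^H=\mathscr{O}_X$. Moreover $\mathcal{E}$ is a quotient of $\mathscr{O}_X\otimes\mathbb{C}[x,y]$, since $\mathcal{U}\subseteq X\times\mathbb{C}^2$. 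Taking $\tau$-isotypic parts, $\mathcal{E}_\tau$ is generated by finitely many global sections. Therefore $L_\tau$ is globally generated, and $(L_\tau\cdot C)_X\ge 0$ for every exceptional curve $C$.

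\textbf{Step 2: a Koszul resolution.} I would use the $H$-equivariant Koszul resolution of the origin on $\mathbb{C}^2$:
\[
0\to\mathscr{O}\otimes\wedge^2T_H\to\mathscr{O}\otimes T_H\to\mathscr{O}\to\mathscr{O}_0\to0,
\]
where $\wedge^2T_H=1$ because $H\subseteq\operatorname{SL}_2(\mathbb{C})$. Pull it back to $\mathcal{U}$ via $q$, push it forward by the finite map $p$, and take $\operatorname{Hom}_{\mathbb{C}[H]}(\tau,-)$, which is exact. Using $a_{\sigma\tau}=\langle T_H\otimes\tau,\sigma\rangle_H$, this gives a complex
\[
0\to\mathcal{E}_\tau\to{\bigoplus}_\sigma a_{\sigma\tau}\,\mathcal{E}_\sigma\to\mathcal{E}_\tau\to\mathcal{Q}_\tau\to0 .
\]
It is exact off $\operatorname{Exc}(\varphi)$, and $\mathcal{Q}_\tau$ is supported on $\operatorname{Exc}(\varphi)$. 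One must check that this complex is exact on all of $X$ (or at least control its homology). This uses flatness of $p$ and the fact that $q^*x,q^*y$ form a regular sequence along the generic points of the exceptional divisor in $\mathcal U$.

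\textbf{Step 3: the first Chern class identity.} Exactness gives, in $\operatorname{Pic}(X)$,
\[
c_1(\mathcal{Q}_\tau)=2L_\tau-{\sum}_\sigma a_{\sigma\tau}L_\sigma .
\]
The key geometric input, and the step I expect to be the main obstacle, is to identify $-c_1(\mathcal{Q}_\tau)$ with an effective reduced divisor $E_\tau$, namely the locus where the $\tau$-component of the fibre of $\mathcal{E}$ fails to be generated by the image of $T_H$. Equivalently, one must show that $E_\tau$ is a single irreducible exceptional curve, counted with multiplicity one, satisfying $(L_\sigma\cdot E_\tau)=\delta_{\sigma\tau}$.

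My approach to this step is a lattice argument. First, $X$ retracts onto $\operatorname{Exc}(\varphi)$ and $H^1(X,\mathscr{O}_X)=H^2(X,\mathscr{O}_X)=0$. Hence $\operatorname{Pic}(X)\cong H^2(X,\mathbb{Z})\cong\operatorname{Hom}(\bigoplus_C\mathbb{Z}C,\mathbb{Z})$, where $C$ runs over the exceptional curves. Next, the number of exceptional curves equals $|\operatorname{Irr}_0(H)|$, which follows from the ADE classification or from an Euler characteristic count. It then suffices to show that the $L_\tau$ ($\tau\ne1$) form a basis of $\operatorname{Pic}(X)$ with nonnegative degrees. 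For this I would compute the degrees of the $L_\tau$ on curves by restricting $\mathcal{E}$ to a general point of each curve $C$, where the fibre is an $H$-module of the form $\mathbb{C}[x,y]/I$, and reading off which $\tau$ appears in the socle, i.e. the top graded piece. The expectation is that exactly one nontrivial $\tau$ has $\deg L_\tau|_C=1$ and all others have degree $0$; this fact is precisely the content of the correspondence.

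\textbf{Step 4: the intersection numbers.} Granting Steps 1--3, we have $E_\tau=\sum_\sigma a_{\sigma\tau}L_\sigma-2L_\tau$ as divisor classes, with $L_1=0$. Pairing with $E_\sigma$ and using $(L_\rho\cdot E_\sigma)=\delta_{\rho\sigma}$ gives
\[
(E_\tau\cdot E_\sigma)_X=a_{\sigma\tau}-2\delta_{\sigma\tau}=\langle(T_H-2)\tau,\sigma\rangle_H .
\]
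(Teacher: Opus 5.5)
\textbf{Verdict: genuine gap.} The paper does not prove this statement; it quotes it from Gonzalez-Sprinberg--Verdier, so your sketch must stand on its own, and it does not. Step~1 is fine, and the Koszul complex is the right object, but Steps~2--3 rest on a false claim. The complex cannot be exact along $\operatorname{Exc}(\varphi)$. The locus $V(q^*x,q^*y)=q^{-1}(0)$ is a curve in the surface $\mathcal U$, so $q^*x,q^*y$ is not a regular sequence at any of its points, and the Koszul $H_1$ is nonzero there. Your stated justification is impossible: at a generic point of $q^{-1}(0)$ the local ring is one-dimensional and has no regular sequence of length two.

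The homology also sits in the opposite place from what Step~3 needs. For $P\in\operatorname{Exc}(\varphi)$, the fibre $p^{-1}(P)=\operatorname{Spec}\mathbb{C}[x,y]/I_P$ is supported at the origin. So the fibre of $\operatorname{coker}(\mathcal{E}\otimes T_H\to\mathcal{E})$ at $P$ is $\mathbb{C}[x,y]/(I_P+(x,y))=\mathbb{C}$ with trivial $H$-action. By Nakayama, $\mathcal{Q}_\tau=0$ for every $\tau\neq 1$. Even if the complex were exact, $c_1(\mathcal{Q}_\tau)$ would be an effective exceptional cycle, and by negative definiteness $-c_1(\mathcal{Q}_\tau)$ could not be the class of a curve $E_\tau$.

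What the Koszul complex actually gives is $\sum_\sigma a_{\sigma\tau}L_\sigma-2L_\tau=c_1(\mathcal{H}_\tau)$, where $\mathcal{H}_\tau$ is the middle homology, a torsion sheaf on $\operatorname{Exc}(\varphi)$. For $H=\{\pm1\}$ one computes $\mathcal{H}_\tau\cong\mathscr{O}_E(-1)$. Proving that $c_1(\mathcal{H}_\tau)$ is a single reduced curve, and that $\tau\mapsto E_\tau$ is bijective, is the entire theorem, and your proposal does not do it.

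The lattice argument does not fill this gap. A basis of $\operatorname{Pic}(X)\cong\operatorname{Hom}(\bigoplus_C\mathbb{Z}C,\mathbb{Z})$ with nonnegative degrees need not be the dual basis; the degree matrix could be $\bigl(\begin{smallmatrix}1&1\\0&1\end{smallmatrix}\bigr)$. You need the degree matrix to be a permutation matrix, and you explicitly leave that as an ``expectation''. Nor can the degrees be read off from the socle of the fibre at one general point of $C$. The degree $\deg(L_\tau|_C)$ is a global invariant of a line bundle on $C$. The socle description of the curves is a separate theorem (Ito--Nakamura) and does not by itself produce degrees.

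Step~4 is also not an independent argument. Once $(L_\sigma\cdot E_\rho)=\delta_{\sigma\rho}$ and $\operatorname{Pic}(X)\cong\operatorname{Hom}(\bigoplus_\rho\mathbb{Z}E_\rho,\mathbb{Z})$, one has $E_\tau=\sum_\sigma(E_\tau\cdot E_\sigma)L_\sigma$. So the identity $E_\tau=\sum_\sigma a_{\sigma\tau}L_\sigma-2L_\tau$ is equivalent to the intersection formula you are trying to prove.

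Both halves of the theorem are therefore assumed rather than proved. The missing ingredient is an actual determination of $\mathcal{H}_\tau$, e.g.\ $\mathcal{H}_\tau\cong\mathscr{O}_{E_\tau}(-1)$. That is what the derived McKay equivalence of Kapranov--Vasserot and Bridgeland--King--Reid provides; alternatively, use the original Gonzalez-Sprinberg--Verdier argument.
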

	
	\section{Proof of the main theorem}
	
	Keep the notation above and let $H\trianglelefteq G\subseteq\operatorname{SL}_2(\mathbb{C})$ be two finite groups. We first prove a lemma:
	
	\begin{lemma}\label{lem:lift-and-linearization}
		The $G$-action on $\mathbb{C}^2/H$ lifts naturally to $X$, and $\mathcal{E}\in \operatorname{Coh}_{G}(X)$. 
	\end{lemma}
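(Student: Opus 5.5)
The plan has two parts: first lift the $G$-action to $X$, then produce a $G$-linearization of $\mathcal{E}=p_*\mathscr{O}_{\mathcal U}$. Because $H$ is normal in $G$, each $g\in G$ normalizes $H$. The linear action of $g$ on $\mathbb{C}^2$ therefore permutes $H$-orbits and descends to an automorphism $\bar g$ of $\mathbb{C}^2/H$. This gives a $G$-action on $\mathbb{C}^2/H$ which factors through $G/H$. The quotient $\mathbb{C}^2/G$ is then $(\mathbb{C}^2/H)/(G/H)$, by the universal property of categorical quotients.

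To lift $\bar g$ to $X$, I would use uniqueness of the minimal resolution. The composite $\bar g\circ\varphi\colon X\to\mathbb{C}^2/H$ is again a minimal resolution of $\mathbb{C}^2/H$. Indeed, it is proper and birational, $X$ is smooth, and $X$ contains no $(-1)$-curves because this is intrinsic to $X$. Since the minimal resolution of a normal surface is unique up to unique isomorphism over the base, there is a unique automorphism $\tilde g\colon X\to X$ with $\varphi\circ\tilde g=\bar g\circ\varphi$. Uniqueness gives $\widetilde{gh}=\tilde g\circ\tilde h$, and the identity lifts to the identity, so we obtain a group homomorphism $G\to\operatorname{Aut}(X)$. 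This is the ``natural'' lift. Alternatively, one could use $X\cong\operatorname{Hilb}^H(\mathbb{C}^2)$, on which $G$ acts by $Z\mapsto gZ$, since $gZ$ is again $H$-invariant when $H$ is normal. I will instead use the minimal resolution argument, to avoid the Hilbert scheme, as the paper does.

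Next, $G$ acts on the fiber product $X\times_{\mathbb{C}^2/H}\mathbb{C}^2$ by $(x,v)\mapsto(\tilde g x, gv)$. This is well defined because both structure maps are $G$-equivariant. The action preserves the reduced subscheme, so $G$ acts on $\mathcal U$ with $p$ and $q$ equivariant. Note that $H\subseteq G$ acts trivially on $X$ and by its natural action on the $\mathbb{C}^2$ factor, so this is compatible with the $H$-structure of $\mathcal{E}$ used in Section~\ref{sec:gsv}. A $G$-action on $\mathcal U$ gives isomorphisms $\mu_g\colon g^*\mathscr{O}_{\mathcal U}\xrightarrow{\sim}\mathscr{O}_{\mathcal U}$ satisfying the cocycle condition; this is the canonical $G$-linearization of the structure sheaf. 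Since $p$ is finite and $G$-equivariant, flat base change along the automorphism $\tilde g$ gives $\tilde g^*p_*\mathscr{O}_{\mathcal U}\cong p_*g^*\mathscr{O}_{\mathcal U}$. Composing with $p_*\mu_g$ defines $\lambda_g\colon \tilde g^*\mathcal{E}\xrightarrow{\sim}\mathcal{E}$.

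The cocycle identity $\lambda_{gh}=\lambda_h\circ h^*\lambda_g$ then follows from the corresponding identity for $\mu$ together with the naturality of the base-change isomorphisms. Since $\mathcal{E}$ is locally free and hence coherent, $\mathcal{E}\in\operatorname{Coh}_G(X)$.

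The main obstacle is mostly bookkeeping. One must check the cocycle condition carefully with the paper's convention $\lambda_{gh}=\lambda_h\circ h^*\lambda_g$, including whether $g$ acts by $g$ or $g^{-1}$ on functions. One must also confirm that the restriction of this linearization to $H$, where $\tilde h=\mathrm{id}$, agrees with the $H$-module structure on $\mathcal{E}$ that defines $\mathcal{E}_\tau$. I would handle both by working locally: over an affine $G$-stable open cover pulled back from $\mathbb{C}^2/G$, everything reduces to the explicit action $(g\cdot f)(x,v)=f(\tilde g^{-1}x,g^{-1}v)$ on the coordinate ring of $\mathcal U$.
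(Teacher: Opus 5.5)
Your proposal is correct and takes essentially the paper's route. You lift each $g$ to $X$ by uniqueness of the minimal resolution, let $G$ act diagonally on $\mathcal U$ so that $p$ is equivariant, and push forward the canonical linearization of $\mathscr{O}_{\mathcal U}$ along $p$; your cocycle and $H$-compatibility checks are details the paper leaves implicit. One minor slip in your bookkeeping remark: opens of $X$ pulled back from affine opens of $\mathbb{C}^2/G$ through the origin contain the complete exceptional curves, so they are not affine, and you should instead take preimages of affine opens of $X/G$ under the finite map $\pi$.
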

	\begin{proof}
		For all $g\in G$, view $g$ as an automorphism $g_{\mathbb{C}^2/H}\colon\mathbb{C}^2/H\to \mathbb{C}^2/H$. The composition $g_{\mathbb{C}^2/H}\circ \varphi :X\to \mathbb{C}^2/H$ is again a minimal resolution of $\mathbb{C}^2/H$. The uniqueness of the minimal resolution of a surface singularity gives a unique automorphism $g_X:X\to X$ such that $\varphi \circ g_X=g_{\mathbb{C}^2/H}\circ \varphi $, and the uniqueness also implies $(g_1g_2)_X=(g_1)_X\circ(g_2)_X$ for all $g_1, g_2\in G$. Thus $G$ acts on $X$. 
		
		Now $G$ acts diagonally on $\mathcal{U}=(X\times _{\mathbb{C}^2/H} \mathbb{C}^2)_\text{red}$ by $g:(x, z)\mapsto (gx, gz)$. Under this action, $p:\mathcal{U}\to X,\ (x, z)\mapsto x$ is $G$-equivariant. Thus for all $g\in G$, the $G$-linearization of $\mathscr{O}_\mathcal{U}$ gives the $G$-linearization of $\mathcal{E}$:
		$$g^*\mathcal{E}=g^*p_*\mathscr{O}_\mathcal{U}= p_*g^*\mathscr{O}_\mathcal{U}\xrightarrow{\sim } p_*\mathscr{O}_\mathcal{U}=\mathcal{E}.$$
	\end{proof}
	
	Let $\pi:X\to X/G$ be the quotient. We get a diagram:
	$$\begin{tikzcd}
		\mathcal{U} \arrow[d, "q"'] \arrow[r, "p"] & X \arrow[d, "\varphi "] \arrow[r, "\pi"] & X/G \arrow[d]  \\
		\mathbb{C}^2 \arrow[r]                     & \mathbb{C}^2/H \arrow[r]                 & \mathbb{C}^2/G
	\end{tikzcd}$$
	
	Recall that $G$ acts on $\operatorname{Irr}_0(H)$ by $\tau \mapsto \tau ^g$ (\S\ref{subsec:representations}) and acts on $\operatorname{Irr}(\operatorname{Exc}(\varphi ))$ by $E\mapsto g^*E$ (\S\ref{subsec:Intersection-Pairing-and-Projection-Formulas}). We prove part~(1) of Theorem~\ref{thm:main-theorem}:
	
	\begin{theorem}\label{thm:equivariant}
		The McKay correspondence $\tau \mapsto E_\tau $ in Theorem~\ref{thm:GSV-correspondence} is $G$-equivariant. In other words, for all $g\in G$ and $\tau\in\operatorname{Irr}_0(H)$
		$$g^*E_\tau=E_{\tau^g}.$$
	\end{theorem}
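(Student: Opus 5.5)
The plan is to use the characterization of $E_\tau$ in Theorem~\ref{thm:GSV-correspondence}: $E_\tau$ is the unique exceptional component with $(\det\mathcal{E}_\sigma\cdot E_\tau)_X=\delta_{\sigma\tau}$ for all $\sigma\in\operatorname{Irr}_0(H)$. It therefore suffices to show
\[
(\det\mathcal{E}_\sigma\cdot g^*E_\tau)_X=\delta_{\sigma,\tau^g}\quad\text{for all }\sigma\in\operatorname{Irr}_0(H).
\]
Note first that $g^*E_\tau$ is again an irreducible exceptional curve, since $g_X$ covers an automorphism of $\mathbb{C}^2/H$ fixing the singular point. Next, intersection numbers of line bundles with curves are invariant under automorphisms: $(\mathcal{L}\cdot g^*E)_X=((g^{-1})^*\mathcal{L}\cdot E)_X$. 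This holds because $g$ restricts to an isomorphism $g^{-1}(E)\to E$ and pulls back $\mathcal{L}|_E$ compatibly, and degree on normalizations is preserved. The whole problem thus reduces to computing $h^*\mathcal{E}_\sigma$ for $h\in G$, in terms of some $\mathcal{E}_{\sigma'}$.

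The key step is the isomorphism
\[
h^*\mathcal{E}_\sigma\cong\mathcal{E}_{\sigma^{h'}}
\]
for the appropriate $h'$ (either $h$ or $h^{-1}$, to be fixed by careful bookkeeping). Here $H$ acts trivially on $X$, since it acts trivially on $\mathbb{C}^2/H$ and lifts are unique. Hence $\mathcal{E}$ is an $\mathscr{O}_X[H]$-module via the $H$-linearization of Lemma~\ref{lem:lift-and-linearization}, which comes from $H$ acting on the $\mathbb{C}^2$ factor of $\mathcal U$. The $G$-linearization $\lambda_g\colon g^*\mathcal{E}\xrightarrow{\sim}\mathcal{E}$ is induced by the diagonal action $(x,z)\mapsto(gx,gz)$ on $\mathcal U$. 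For $h\in H$, this gives the intertwining relation $\lambda_g\circ g^*(h)=(g^{-1}hg)\circ\lambda_g$ (up to the convention on sides). Indeed, on $\mathcal U$ the actions satisfy $g\circ h'=(gh'g^{-1})\circ g$, and all these automorphisms commute with $p$ because $H$ acts trivially on $X$. Consequently $\lambda_g$ is an isomorphism of $\mathscr{O}_X$-modules from $g^*\mathcal{E}$ with its $H$-structure twisted by conjugation onto $\mathcal{E}$. Applying the exact functor $\operatorname{Hom}_{\mathbb{C}[H]}(\tau,-)$ then yields
\[
g^*\mathcal{E}_{\tau}\cong\operatorname{Hom}_{\mathbb{C}[H]}(\tau,g^*\mathcal{E})\cong\operatorname{Hom}_{\mathbb{C}[H]}(\tau^{g^{\pm1}},\mathcal{E})=\mathcal{E}_{\tau^{g^{\pm1}}},
\]
and taking determinants gives the corresponding isomorphism of line bundles. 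Conjugation permutes $\operatorname{Irr}_0(H)$ and fixes the trivial representation, so the indices stay in $\operatorname{Irr}_0(H)$.

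Combining the two steps,
\[
(\det\mathcal{E}_\sigma\cdot g^*E_\tau)_X=((g^{-1})^*\det\mathcal{E}_\sigma\cdot E_\tau)_X=(\det\mathcal{E}_{\sigma^{g^{\mp1}}}\cdot E_\tau)_X=\delta_{\sigma^{g^{\mp1}},\tau}=\delta_{\sigma,\tau^{g^{\pm1}}}.
\]
The uniqueness in Theorem~\ref{thm:GSV-correspondence} then identifies $g^*E_\tau$ with $E_{\tau^{g^{\pm1}}}$.

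The main obstacle is the sign and convention bookkeeping. We must check that the exponent comes out as $\tau^g$ and not $\tau^{g^{-1}}$, using the conventions $\tau^g(h)=\tau(ghg^{-1})$, $g^*E=g^{-1}(E)$, and $\lambda_{gh}=\lambda_h\circ h^*\lambda_g$. I would settle this by working fiberwise. At a point $x$, the fiber $\mathcal{E}(x)=\mathscr{O}(p^{-1}(x))$ is a representation of $H$, and $g$ induces $\mathscr{O}(p^{-1}(gx))\to\mathscr{O}(p^{-1}(x))$ via $z\mapsto gz$. The resulting map is an isomorphism $\mathcal{E}(gx)^{g}\cong\mathcal{E}(x)$ of $H$-representations with the twist explicit, and the direction of the twist can then be read off directly.
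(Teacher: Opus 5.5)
Your route is the paper's route. You characterize $E_\tau$ by $(\det\mathcal{E}_\sigma\cdot E_\tau)_X=\delta_{\sigma\tau}$, move $g$ across the pairing via $(\mathcal{L}\cdot g^*E)_X=((g^{-1})^*\mathcal{L}\cdot E)_X$, and use the $G$-linearization of $\mathcal{E}$ induced by the diagonal action on $\mathcal{U}$ to identify $g^*\mathcal{E}_\sigma$ with a conjugate $\mathcal{E}_{\sigma'}$. You also make explicit several points the paper leaves implicit: $H$ acts trivially on $X$, $g^*E_\tau$ is again an exceptional component, and conjugation preserves $\operatorname{Irr}_0(H)$.

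The one incompleteness is that you stop at $g^*E_\tau=E_{\tau^{g^{\pm1}}}$. The statement asserts the exponent $g$ specifically, so as written your argument does not yet prove it. No fiberwise check is needed, though, because the relation you already wrote settles it. That relation, $\lambda_g\circ g^*(h)=(g^{-1}hg)\circ\lambda_g$, is correct as stated with no side ambiguity, since $h^{\pm1}\circ g=g\circ(g^{-1}hg)^{\pm1}$. Now let $\phi\colon\sigma\to g^*\mathcal{E}$ be $H$-linear and put $\psi:=\lambda_g\circ\phi$. Then $\psi\circ\sigma(h)=(g^{-1}hg)\circ\psi$, and substituting $h=gkg^{-1}$ gives $\psi\circ\sigma^g(k)=k\circ\psi$ for all $k\in H$. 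So $\phi\mapsto\lambda_g\circ\phi$ is an isomorphism $g^*\mathcal{E}_\sigma\cong\mathcal{E}_{\sigma^g}$, exactly as the paper asserts. Hence $(g^{-1})^*\det\mathcal{E}_\sigma\cong\det\mathcal{E}_{\sigma^{g^{-1}}}$. Since $(\sigma^{g_1})^{g_2}=\sigma^{g_1g_2}$, you get $\delta_{\sigma^{g^{-1}},\tau}=\delta_{\sigma,\tau^g}$, which yields $g^*E_\tau=E_{\tau^g}$. This is consistent with both $\tau\mapsto\tau^g$ and $E\mapsto g^*E=g^{-1}(E)$ being right actions.

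The sign is harmless in the later orbit-level results on $\operatorname{Ind}$ and $\pi_*$, where only $G$-orbits matter. It is, however, part of this statement.
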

	\begin{proof}
		For all $g\in G$ and $\sigma \in \operatorname{Irr}_0(H)$, the $G$-linearization of $\mathcal{E}$ gives an isomorphism 
		$$
		g^*\mathcal{E}_\sigma  =g^*\operatorname{Hom}_{\mathbb{C}[H]}(\sigma , \mathcal{E}) = \operatorname{Hom}_{\mathbb{C}[H]}(\sigma ^g, g^*\mathcal{E})\xrightarrow{\sim} \operatorname{Hom}_{\mathbb{C}[H]}(\sigma  ^g, \mathcal{E})=\mathcal{E}_{\sigma ^g}.
		$$
		Now
		$$
		(\det\mathcal E_\sigma)\cdot g^*E_\tau=
		\bigl((g^{-1})^*\det\mathcal E_\sigma\bigr)\cdot E_\tau =
		(\det\mathcal E_{\sigma^{g^{-1}}})\cdot E_\tau =
		\delta_{\sigma^{g^{-1}},\tau}=
		\delta_{\sigma,\tau^g}.
		$$
		Therefore, the characterization of $E_{\tau ^g}$ in Theorem~\ref{thm:GSV-correspondence} shows
		$g^*E_\tau=E_{\tau^g}$. 
	\end{proof}
	
	The $G$-equivariance allows us to descend the McKay correspondence to a correspondence of $G$-orbits. Induction $\operatorname{Ind}^G_H$ detects the orbits of $\operatorname{Irr}_0(H)$, while the push-forward $\pi_*$ detects the orbits of $\operatorname{Irr}(\operatorname{Exc}(\varphi ))$. Set $\operatorname{Ind}\mkern1mu\operatorname{Irr}_0(H):=\{\operatorname{Ind}^G_H\tau\mid \tau \in \operatorname{Irr}_0(H)\}$ and $\pi_*\operatorname{Irr}(\operatorname{Exc}(\varphi ))=\{\pi_*E\mid E\in \operatorname{Irr}(\operatorname{Exc}(\varphi ))\}$. We now prove part~(2) of Theorem~\ref{thm:main-theorem}:
	
	\begin{theorem}\label{thm:our-correspondence}
		The McKay correspondence  $\tau \mapsto E_\tau$ in Theorem \ref{thm:GSV-correspondence} descends to a one-to-one correspondence 
		$$
		\operatorname{Ind} \mkern1mu \operatorname{Irr}_0(H) \to \pi_* \operatorname{Irr}(\operatorname{Exc}(\varphi)), \quad \operatorname{Ind} \mkern1mu \tau \mapsto \pi_*E_\tau.
		$$
	\end{theorem}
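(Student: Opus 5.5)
The idea is to reduce both sides to $G$-orbits and use Theorem~\ref{thm:equivariant} to match the orbits. Concretely, I will show that for $\tau,\sigma\in\operatorname{Irr}_0(H)$ the following are equivalent:
\[
\operatorname{Ind}\tau\cong\operatorname{Ind}\sigma
\iff \sigma\cong\tau^g \text{ for some } g\in G
\iff E_\sigma=g^*E_\tau \text{ for some } g\in G
\iff \pi_*E_\tau=\pi_*E_\sigma .
\]
The first and last equivalences together say that the map $\operatorname{Ind}\tau\mapsto\pi_*E_\tau$ is well defined and injective. Surjectivity is automatic, since every $E\in\operatorname{Irr}(\operatorname{Exc}(\varphi))$ equals $E_\tau$ for some $\tau$ by Theorem~\ref{thm:GSV-correspondence}. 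One remark on the domain: $\operatorname{Ind}1$ does not coincide with $\operatorname{Ind}\tau$ for $\tau\ne 1$. Indeed, the orbit of $1$ under $\tau\mapsto\tau^g$ is $\{1\}$, so by the first equivalence $\operatorname{Irr}_0(H)$ is a union of orbits. Hence $\operatorname{Ind}\operatorname{Irr}_0(H)$ is exactly the set of inductions of elements of $\operatorname{Irr}_0(H)$.

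\textbf{Representation side.} By Lemma~\ref{lem:Frobenius-reciprocity-Mackey-formula},
\[
\langle\operatorname{Ind}\tau,\operatorname{Ind}\sigma\rangle_G=\langle\operatorname{Res}\operatorname{Ind}\tau,\sigma\rangle_H=\sum_{gH\in G/H}\langle\tau^g,\sigma\rangle_H .
\]
If $\operatorname{Ind}\tau\cong\operatorname{Ind}\sigma$, this number equals $\langle\operatorname{Ind}\tau,\operatorname{Ind}\tau\rangle_G>0$, so $\sigma\cong\tau^g$ for some $g$. Conversely, $\operatorname{Ind}(\tau^g)\cong\operatorname{Ind}\tau$ holds by the standard isomorphism $v\mapsto g\otimes v$, or equivalently because the characters agree, since induced characters are computed by summing over conjugates.

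\textbf{Middle equivalence.} This is exactly Theorem~\ref{thm:equivariant}: $g^*E_\tau=E_{\tau^g}$, combined with the injectivity of $\tau\mapsto E_\tau$.

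\textbf{Geometric side.} If $E_\sigma=g^*E_\tau$, then $\pi(E_\sigma)=\pi(g^{-1}E_\tau)=\pi(E_\tau)=:B$, since $\pi$ is $G$-invariant. The degrees also agree: $g$ restricts to an isomorphism $E_\sigma\to E_\tau$ commuting with the maps to $B$, so $f_{E_\sigma}=f_{E_\tau}$, and hence $\pi_*E_\sigma=\pi_*E_\tau$. Conversely, if $\pi_*E_\tau=\pi_*E_\sigma$, then $\pi(E_\tau)=\pi(E_\sigma)=B$, so both curves are irreducible components of $\pi^{-1}(B)$. The fibres of the quotient map $\pi$ are single $G$-orbits, so $\pi^{-1}(B)=\bigcup_g g^{-1}(E_\tau)$ is a finite union of irreducible curves. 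Thus $E_\sigma$ is one of the $g^*E_\tau$; alternatively, read this off from Lemma~\ref{lem:projection-formula}(2), $\pi^*\pi_*E_\tau=\sum g^*E_\tau$, whose support contains $E_\sigma$.

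The only mildly delicate step is the converse on the geometric side, namely that $\pi^{-1}(\pi(E))$ is the $G$-saturation of $E$. I will justify it through Lemma~\ref{lem:projection-formula}(2) rather than set-theoretic fibre arguments, because the proof of that lemma already records that the support of $\pi^*\pi_*E$ is $\bigcup g^*E$.
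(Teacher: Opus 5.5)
Your proof is correct and follows essentially the same route as the paper: Frobenius reciprocity and the Mackey formula show that equal inductions correspond to $G$-orbits in $\operatorname{Irr}_0(H)$, Theorem~\ref{thm:equivariant} transports these orbits to orbits of exceptional curves, Lemma~\ref{lem:projection-formula}(2) shows that equal push-forwards correspond to $G$-orbits of curves, and surjectivity comes from surjectivity of $\tau\mapsto E_\tau$. You also spell out two converse steps that the paper leaves implicit, namely $\operatorname{Ind}\tau^g\cong\operatorname{Ind}\tau$ and $\pi_*g^*E=\pi_*E$ via equality of the degrees $f_E$, which is a harmless improvement.
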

	\begin{proof}
		If $\tau , \sigma \in \operatorname{Irr}_0(H)$ satisfy $\operatorname{Ind}\tau =\operatorname{Ind}\sigma $, then $\operatorname{Res}\mkern1mu\operatorname{Ind}\tau =\operatorname{Res}\mkern1mu \operatorname{Ind}\sigma $. The Mackey formula (Lemma \ref{lem:Frobenius-reciprocity-Mackey-formula}) shows that $\tau , \sigma $ lie in the same $G$-orbit. From Theorem \ref{thm:equivariant}, $E_\tau $ and $E_\sigma $ lie in  the same $G$-orbit. Thus $\pi_*E_\tau =\pi_*E_\sigma $. Therefore $\operatorname{Ind}\tau \mapsto \pi_*E_\tau$ is well-defined. 
		
		If $E_\tau , E_\sigma \in \operatorname{Irr}(\operatorname{Exc}(\varphi ))$ satisfy $\pi_*E_\tau =\pi_*E_\sigma $, then $\pi^*\pi_*E_\tau =\pi^*\pi_*E_\sigma $. The projection formula (Lemma \ref{lem:projection-formula}) shows that $E_\tau $ and $E_\sigma $ lie in the same $G$-orbit. From Theorem \ref{thm:equivariant}, $\tau $ and $\sigma $ lie in  the same $G$-orbit. Thus $\operatorname{Ind}\tau =\operatorname{Ind}\sigma $. Therefore $\operatorname{Ind}\tau \mapsto \pi_*E_\tau$ is injective. 
		
		Since $\tau \mapsto E_\tau$ is surjective, $\operatorname{Ind}\tau \mapsto \pi_*E_\tau$ is also surjective.
	\end{proof}
	
	It remains to compare the intersection pairing of exceptional curves and the inner products of representations. Let $T_G:G\hookrightarrow \operatorname{SL}_2(\mathbb{C})$ be the natural representation. We now prove part~(3) of Theorem~\ref{thm:main-theorem}:
	
	\begin{theorem}\label{thm:pairing-formula}
		The correspondence $\operatorname{Ind}\tau \mapsto \pi_*E_\tau $ in Theorem \ref{thm:our-correspondence} satisfies: for all $\operatorname{Ind} \tau , \operatorname{Ind} \sigma  \in \operatorname{Ind} \mkern1mu \operatorname{Irr}_0(H)$,
		$$
		(\pi_*E_\tau  \cdot \pi_*E_\sigma)_{X/G}  = \left\langle (T_G-2)\otimes \operatorname{Ind}\tau , \operatorname{Ind}\sigma \right\rangle_G.
		$$
	\end{theorem}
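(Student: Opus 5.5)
The plan is to compute both sides by moving everything up to $X$ and to $H$ using the two parallel pairs of formulas, and then compare with Theorem~\ref{thm:GSV-correspondence}.

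\emph{Geometric side.} By Lemma~\ref{lem:projection-formula}(1) applied with $B=\pi_*E_\sigma$ and $E=E_\tau$, we get
\[
(\pi_*E_\tau\cdot\pi_*E_\sigma)_{X/G}=(E_\tau\cdot\pi^*\pi_*E_\sigma)_X .
\]
Lemma~\ref{lem:projection-formula}(2) then expands the pull-back as a sum over the $G$-orbit of $E_\sigma$. Here the subgroup in the lemma is $K:=\ker(G\to\operatorname{Aut}(X))$, so the sum runs over $G/K$. We then rewrite this as $\frac{1}{|K|}\sum_{g\in G}(E_\tau\cdot g^*E_\sigma)_X$. Theorem~\ref{thm:equivariant} gives $g^*E_\sigma=E_{\sigma^g}$, and Theorem~\ref{thm:GSV-correspondence} gives
\[
(\pi_*E_\tau\cdot\pi_*E_\sigma)_{X/G}=\frac{1}{|K|}\sum_{g\in G}\langle (T_H-2)\tau,\sigma^g\rangle_H .
\]

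\emph{Representation side.} By Frobenius reciprocity (Lemma~\ref{lem:Frobenius-reciprocity-Mackey-formula}(1)) and the identity $\operatorname{Res}(T_G\otimes\operatorname{Ind}\tau)=T_H\otimes\operatorname{Res}\operatorname{Ind}\tau$, we get
\[
\langle (T_G-2)\operatorname{Ind}\tau,\operatorname{Ind}\sigma\rangle_G=\langle (T_H-2)\operatorname{Res}\operatorname{Ind}\tau,\sigma\rangle_H .
\]
The Mackey formula rewrites the right side as $\sum_{gH\in G/H}\langle (T_H-2)\tau^g,\sigma\rangle_H=\frac{1}{|H|}\sum_{g\in G}\langle (T_H-2)\tau^g,\sigma\rangle_H$. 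To match this with the geometric side, we use three facts. First, $T_H^g\cong T_H$, since $T_H^g$ is conjugation by an element of $\operatorname{SL}_2$. Second, the pairing is invariant, $\langle\alpha^g,\beta^g\rangle_H=\langle\alpha,\beta\rangle_H$. Third, reindexing $g\mapsto g^{-1}$ turns $\langle(T_H-2)\tau^g,\sigma\rangle_H$ into $\langle(T_H-2)\tau,\sigma^{g}\rangle_H$ up to reindexing. With these, both sides become a constant times $\sum_{g\in G}\langle (T_H-2)\tau,\sigma^g\rangle_H$.

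\emph{Main obstacle: the constants $1/|K|$ and $1/|H|$.} These must agree, so we need $K=H$, i.e.\ the kernel of the $G$-action on $X$ is exactly $H$. That $H\subseteq K$ is clear, since $H$ acts trivially on $\mathbb{C}^2/H$ and the lift to $X$ is unique. For the reverse inclusion, suppose $g$ acts trivially on $X$. Then $g$ acts trivially on $\mathbb{C}^2/H$, because $\varphi$ is surjective and equivariant. Since $\mathbb{C}^2\to\mathbb{C}^2/H$ has generic fibres equal to free $H$-orbits, $g$ maps each generic point $z$ to some $h_zz$. By continuity and finiteness, $g$ agrees with a fixed $h$ on a dense set, so $g=h\in H$ as linear maps. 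Hence $K=H$, and the equality follows.

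If one prefers to avoid the kernel argument, there is a fallback: allow $K\supsetneq H$ formally and check the ratio directly. But the linear-algebra argument above should suffice. The remaining points are bookkeeping: bilinearity of both pairings in the virtual representation $T-2$, and symmetry of the intersection pairing. Neither should pose difficulty.
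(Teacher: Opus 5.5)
Your proof is correct and follows essentially the same route as the paper: projection formula, $G$-equivariance, Theorem~\ref{thm:GSV-correspondence}, the Mackey formula, and Frobenius reciprocity. There are two differences. You pull back $\pi_*E_\sigma$ rather than $\pi_*E_\tau$, so you need the extra facts $T_H^g\cong T_H$ and $g$-invariance of $\langle -,-\rangle_H$; the paper pulls back $\pi_*E_\tau$ and lands directly on $\sum_{gH}\langle (T_H-2)\tau^g,\sigma\rangle_H$. You also check explicitly that $\ker(G\to\operatorname{Aut}(X))=H$, which the paper uses silently when it applies Lemma~\ref{lem:projection-formula}(2) with the sum over $G/H$.
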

	\begin{proof}
		By Theorem \ref{thm:GSV-correspondence}, $(E_{\tau^g}\cdot E_\sigma)_X=\langle (T_H-2)\tau^g,\sigma\rangle_H$. By Lemma \ref{lem:Frobenius-reciprocity-Mackey-formula} and Lemma \ref{lem:projection-formula}, we have:
		\[\begin{split}
			(\pi_*E_\tau  \cdot \pi_*E_\sigma)_{X/G} &= (\pi^*\pi_*E_\tau  \cdot E_\sigma )_X \\
			&={\sum}_{gH \in G/H} (g^*E_\tau  \cdot E_\sigma )_X \\
			&= {\sum}_{gH \in G/H} (E_{\tau^g}  \cdot E_\sigma )_X \\
			&= {\sum}_{gH \in G/H} \langle (T_H-2)\otimes \tau ^g, \sigma  \rangle_H \\
			&= \langle (T_H-2)\otimes \operatorname{Res}\mkern1mu\operatorname{Ind}\tau , \sigma  \rangle_H \\
			&= \left\langle \operatorname{Res}\big((T_G-2)\otimes  \operatorname{Ind}\tau \big), \sigma \right\rangle_H= \langle (T_G-2) \otimes \operatorname{Ind}\tau,  \operatorname{Ind}\sigma  \rangle_G.
		\end{split}\]
	\end{proof}

\end{document}